\setlist{leftmargin=*}
\newtheorem{thm}{Theorem}[section] 
\newtheorem{cor}[thm]{Corollary}
\newtheorem{lem}[thm]{Lemma} 
\newtheorem{prop}[thm]{Proposition}
\newtheorem{claim}[thm]{Claim} 
\newtheorem{conj}[thm]{Conjecture}
\theoremstyle{definition} 
\newtheorem{defn}[thm]{Definition}
\theoremstyle{remark} 
\newtheorem{rem}[thm]{Remark}
 \numberwithin{equation}{section}
    {\medskip\begingroup\leftskip 0.5cm\rightskip 0.5cm\noindent\begin{small}{\bf Remark.}}
    {\end{small}\par\endgroup}
{\begin{list}{$\bullet$}
 {\settowidth{\labelwidth}{\textsf{$\bullet$}} \setlength{\leftmargin}{10pt}}}
{\end{list}}
\newcounter{ssample}[section]
\newcounter{insertcount}
\noindent\begin{small}{\color{blue} \stepcounter{insertcount}
          {
            \bf Insert \arabic{insertcount}. #1.}
            \addcontentsline{toc}{subsection}{{\ \ \small  Insert \arabic{insertcount}: #1}}
               \leavevmode  } 
\newcommand{\mrmk}[1]
{{\tiny$^{\spadesuit}$}\marginpar{\fbox{\footnotesize #1}}}
\def\strutdepth{\dp\strutbox}%
\def\marginalnote#1{\strut\vadjust{\kern-\strutdepth\specialnote{#1}}}%
\def\specialnote#1{\vtop to \strutdepth{\baselineskip%
\strutdepth\vss\llap{\hbox{\scriptsize \bf #1}}\null}}%
\newcommand{\RR}{\mathbb{R}}
\def\CF{\mathcal F}
\newcommand{\NN}{\mathbb N}
\def\tp{\mathrm{tp}}
\begin{document}

\title[Erd\H{o}s-Hajnal for stable graphs]{A note on the Erd\H{o}s-Hajnal property for stable
  graphs}%

\author[A.~Chernikov]{Artem Chernikov}  
\address{Department of Mathematics, University of California Los Angeles, Los Angeles, CA 90095-1555, USA}
\email{chernikov@math.ucla.edu}
\thanks{The first author was partially supported by ValCoMo (ANR-13-BS01-0006), by the Fondation Sciences Mathematiques de Paris
(FSMP) and by the Investissements d'avenir program (ANR-10-LABX-0098)}

\author[S.~Starchenko]{Sergei Starchenko}
\address{Department of Mathematics, University of Notre Dame, Notre Dame, IN 46556}
\email{Starchenko.1@nd.edu}
\thanks{The second author was partially supported by NSF}

\maketitle

\begin{abstract} In this short note we provide a relatively simple proof of
  the Erd\H{o}s--Hajnal conjecture for families of finite (hyper-)graphs without the
  $m$-order property. It was originally proved by M.~Malliaris and
  S.~Shelah in \cite{ms}.
\end{abstract}

\section{Introduction}
\label{sec:introduction}

By a graph $G$ we mean, as usual, a pair $(V,E)$, where $E$ is a symmetric subset
of $V\times V$.  If $G$ is a graph then a \emph{clique} in $G$ is  a set of
vertices all pairwise adjacent, and an \emph{anti-clique} in $G$ is a
set of vertices such that any two different vertices from it are non-adjacent.

As usual, for a graph $H$ we say that a graph $G$ is $H$-free if $G$
does not contain an induced subgraph isomorphic to $H$. 

It is well-known that every graph on $n$ vertices contains either a clique or an anticlique of size $\frac{1}{2} \log n$, and that this is optimal in general. However, the following famous  conjecture of Erd\H{o}s and Hajnal says that one can do much better in a family of graphs omitting a certain fixed graph $H$.

\begin{conj}\label{conj: EH}(Erd\H{o}s-Hajnal conjecture \cite{eh})
For every finite graph $H$ there is a real number $\delta=\delta(H)>0$
such that every finite $H$-free graph $G=(V,E)$ contains either a
clique or an anti-clique of size at least $|V|^\delta$.
\end{conj}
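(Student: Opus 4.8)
\ The statement as phrased is the full Erd\H{o}s--Hajnal conjecture, which is open; so in place of a proof I can only lay out the natural line of attack and indicate exactly where it stalls. The first move would be to replace the target conclusion by the \emph{strong Erd\H{o}s--Hajnal property}, which is the object one can hope to attack combinatorially: instead of a single large homogeneous \emph{set}, one looks for a homogeneous \emph{pair}, i.e.\ disjoint $A,B\subseteq V$ with $|A|,|B|\geq c|V|$ such that $A\times B$ lies either entirely inside $E$ or entirely outside it. It is standard (Fox--Sudakov) that if every induced subgraph of $G$ carries such a pair, then $G$ contains a clique or anti-clique of size $|V|^{\delta}$; so it would suffice to produce a homogeneous pair in every $H$-free graph.

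Next I would cut down the class of forbidden graphs using the substitution/blow-up closure theorem of Alon, Pach and Solymosi: if $H$ arises by substitution from strictly smaller graphs each enjoying the Erd\H{o}s--Hajnal property, then so does $H$. This lets one assume $H$ is \emph{prime} and lets one pass freely from $G$ to a blow-up of $G$ without leaving the $H$-free class. The handful of genuinely known cases live here; for instance, for $H=P_{4}$ the $H$-free graphs are exactly the cographs, which are perfect, and then $\omega(G)\cdot\alpha(G)\geq|V|$ already gives the bound with $\delta=\tfrac{1}{2}$.

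For a prime $H$ the hoped-for dichotomy is \emph{quasi-random versus polarized}. Running a Szemer\'edi-type regularity decomposition of $G$, any sufficiently dense regular pair contains an induced copy of every fixed small graph, in particular of $H$; so $H$-freeness ought to force each regular pair to have density near $0$ or near $1$, and such a polarized pair is precisely a homogeneous pair. One would then try to iterate these pairs up to a polynomial-sized homogeneous set.

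\textbf{Where it stalls.}\ This last step is the open heart of the matter. Ordinary regularity delivers homogeneous pairs only of size $|V|/\mathrm{tower}(1/\varepsilon)$, far too small to iterate to a \emph{polynomial} bound, and for a general prime $H$ there is no known mechanism forcing a polynomial-sized homogeneous pair out of mere $H$-freeness; the best unconditional bound remains sub-polynomial, of the form $2^{c\sqrt{\log|V|}}$. The situation improves dramatically exactly when $H$-freeness additionally forbids arbitrarily tall half-graphs, i.e.\ forces $k$-stability: there the regularity partition of Malliaris and Shelah \cite{ms} has polynomially many parts and \emph{no} irregular pairs, with every pair $\varepsilon$-homogeneous, which is what makes the stable case --- the actual theorem of this note --- provable. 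It is precisely the loss of this strong, order-free regularity for arbitrary $H$ that blocks the general argument.
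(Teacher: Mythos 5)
You are right that this statement admits no proof: it is the full Erd\H{o}s--Hajnal conjecture, which is open, and the paper accordingly states it only as a conjecture --- its actual result (Theorem \ref{thm:main}) is the special case of $H_m$-free graphs. So there is no proof in the paper to measure your attempt against, and declining to prove an open problem is the correct move. Your survey of the standard line of attack is essentially accurate as of this paper's writing: the reduction to the strong Erd\H{o}s--Hajnal property via Fox--Sudakov, the Alon--Pach--Solymosi substitution closure reducing to prime $H$, the $P_4$/cograph case via perfection, and the correct identification of the bottleneck --- ordinary regularity yields homogeneous pairs only of tower-type relative size, and the best unconditional general bound is of order $2^{c\sqrt{\log |V|}}$, which is genuinely sub-polynomial.

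One caveat on your closing paragraph. You attribute the provability of the stable case to the Malliaris--Shelah stable regularity lemma (polynomially many parts, no irregular pairs, all pairs $\varepsilon$-homogeneous). That is indeed how \cite{ms} proved it, but it is not how this note proves it: the paper's argument is regularity-free. It passes to a pseudofinite ultraproduct, works with the logarithmic counting dimension $\delta(A) = \operatorname{st}(\log|A|/\log|V|)$, takes a large definable set $S$ of minimal finite $R_\Delta$-rank (finiteness being exactly where stability enters), and uses definability of $\Delta$-types to show that a coordinatewise-maximal clique inside a suitable definable set is itself large, via the subadditivity estimate $\delta(B\setminus A)\leq (k-1)\delta(A)$ of Lemma \ref{lem:delta}. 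No partition and no homogeneous pairs are ever produced, and in exchange the argument loses the explicit bounds on $\delta(m)$ that the regularity route provides --- a trade-off the paper itself remarks on. So if your sketch is meant to explain why stability rescues the conjecture, it is worth knowing that stability can be exploited directly through local ranks and pseudofinite counting, not only through improved regularity.
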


It is known to hold for some choices of $H$, but is widely open in
general (see \cite{chudnovsky2014erdos, fox2008induced} for a
survey). A variation of this  conjecture starts with a finite set of
finite graphs $\mathcal{H}=\{ H_1, \ldots, H_k \}$ and asks for the
existence of a real constant $\delta = \delta (\mathcal{H}) > 0$ such
that every finite graph $G$ which is $\mathcal{H}$-free (that is, omits all of the $H_i \in \mathcal{H}$ \emph{simultaneously}), contains either a clique or an anti-clique of size at least $|V|^{\delta}$. The aim of this note is to prove this conjecture for certain $\mathcal{H}$ connected to the model-theoretic notion of \emph{stability}. 

\begin{defn}Given $m \in \mathbb{N}$, we say that a graph $G = (V,E)$ has the \emph{$m$-order property} if there are some vertices $a_1, \ldots a_m, b_1, \ldots, b_m$ from $V$ such that $a_i E b_j $ holds if and only if $i<j$. 
\end{defn}

Note that in this definition we make no requirement on the edges between $a_i, a_j$ for $i\neq j$, and between $b_i,b_j$ for $i \neq j$. 
The following theorem is proved in \cite[Theorem 3.5]{ms}.

\begin{thm}\label{thm:main}
For every $m\in \NN$
there is  a constant $\delta = \delta(m)>0$
such that every finite graph $G=(V,E)$ without the $m$-order property  contains either a
clique or an anti-clique of size at least $|V|^{\delta}$.  
\end{thm}

In this note we provide a short proof of the above theorem (and a version of it for hypergraphs) using pseudo-finite model theory. 

\begin{rem} Theorem 3.5 in \cite{ms} provides explicit bounds on constants
  $\delta(m)$ in terms of $m$, unlike our approach.
\end{rem}

Theorem \ref{thm:main} implies an instance of Conjecture \ref{conj: EH} for certain $\mathcal{H}$. We consider the following graphs, for each $m \in \mathbb{N}$.

\begin{enumerate}

\item Let  $H_m$ be the \emph{half-graph} on $2m$ vertices. Namely, the vertices of $H_m$ are $\{a_1, \ldots, a_m,  b_1, \ldots, b_m\}$, and the edges are $\{ (a_i, b_j) : i<j \}$.
\item Let $H'_m$ be the complement graph of $H_m$. Namely, the vertices of $H'_m$ are $\{a_1, \ldots, a_m,  b_1, \ldots, b_m\}$, and the edges are $\{ (a_i, b_j) : i \geq j \} \cup \{ (a_i, a_j) : i \neq j\} \cup \{ (b_i,b_j) : i \neq j \}$.
\item Let $H''_m$ have $\{a_1, \ldots, a_m, b_1, \ldots, b_m \}$ as its vertices, and $\{ (a_i, b_j) : i<j \} \cup \{ (a_i, a_j) : i \neq j \}$ as its edges.

\end{enumerate}

Finally, let $\mathcal{H}_m = \{ H_m, H'_m, H''_m \}$.
  
  \begin{cor}
  	For every $m \in \mathbb{N}$, the Erd\H{o}s--Hajnal conjecture holds for the family of all $\mathcal{H}_m$-free graphs.
  \end{cor}
  
  \begin{proof}
  
  In view of Theorem \ref{thm:main}, it is enough to show that for every $m \in \mathbb{N}$ there is some $m' \in \mathbb{N}$ such that if a finite graph $G$ is $\mathcal{H}_m$-free, then it doesn't have the $m'$-order property.

Assume that $G$ has the $m'$-order property. That is, there are some vertices $a_1, \ldots, a_{m'},
b_1, \ldots, b_{m'}$ in $V$ such that $a_i E b_j$ holds if and only if $i<j$. If $m'$ is
large enough with respect to $m$, by Ramsey theorem we can find some subsequences $A = \{ a_{i_1}, \ldots, a_{i_{m+1}} \}$ and $B = \{ b_{j_1}, \ldots, b_{j_{m+1}} \}$, $1\leq i_1 < \ldots < i_{m+1} \leq m', 1 \leq j_1 < \ldots < j_{m+1} \leq m'$, such that each of $A,B$ is either 
 a clique or an anti-clique. 
 
 If both are anti-cliques, then the graph induced on $(A \cup B) \setminus \{ a_{i_{m+1}}, b_{j_{m+1}} \}$ is isomorphic to $H_{m}$. If both are cliques, let $a'_{k} := b_{j_{k+1}}$ and $b'_{l} := a_{i_l}$ for  $1 \leq k,l \leq m$. Then the graph induced on $\{a'_1, \ldots, a'_m, b'_1, \ldots, b'_m \}$ is isomorphic to $H'_m$.
 If $A$ is a clique and $B$ is an anti-clique, then the graph induced on ($A \cup B) \setminus \{ a_{i_{m+1}}, b_{j_{m+1}} \}$ is isomorphic to $H''_m$.
 Finally, if $A$ is an anti-clique and $B$ is a clique, let $a'_k := b_{j_{m+1-k}}$ and $b'_l := a_{i_{m+1-l}}$ for $1 \leq k,l \leq m$. Then the graph induced on $\{a'_1, \ldots, a'_m, b'_1, \ldots, b'_m \}$ is again isomorphic to $H''_m$. In any of the cases, $G$ is not $\mathcal{H}_m$-free.

  \end{proof}

\begin{rem}
	We remark that the (strong) Erd\H{o}s-Hajnal property for semialgebraic graphs (and more generally, for graphs definable in arbitrary distal structures) can also be established using model-theoretic methods \cite{chernikov2015regularity}, and that the strong Erd\H{o}s-Hajnal property need not hold under the assumptions of Theorem \ref{thm:main} (see \cite[Section 6]{chernikov2015regularity}).

\end{rem}

\subsection*{Acknowledgements} We thank Dar\'io Alejandro Garc\'ia and Itay Kaplan for their comments on an earlier version of the article.

\section{Preliminaries}

In this paper by a \emph{pseudo-finite set} $V$ we mean an {\bf infinite} set
that is an ultraproduct  $V =
\prod_{i\in I}V_i/\CF$ of {\bf finite} sets $V_i, i\in I$, with respect to
a non-principal
ultrafilter $\CF$ on  $I$. 

Working in ``set theory'', for a pseudo-finite set  $V =
\prod_{i\in I}V_i/\CF$ and a subset $A\subseteq V^k$ we say that $A$
is \emph{definable} (or ``internal'', in the terminology of non-standard analysis) if $A=\prod_{i\in I }A_i/\CF$ for some $A_i\subseteq V_i^k$.

\medskip
Let $V=\prod_{i\in I}V_i/\CF$ be pseudo-finite and $A\subseteq V$ a
definable non-empty subset. We define the ``dimension'' $\delta(A)$ 
($\delta_{C_0}(A)$ in the notation of \cite{h})  to be the
number in $[0,1]$ that is the
standard part of $\log(|A|)/\log(|V|)$.  As an alternative  definition,  
write $A$ as $A=\prod_{i\in I
}A_i/\CF$, where each  $A_i$ is a non-empty subset of $V_i$. 
For each $i\in I$ let $l_i=\log(|A_i|)/\log(|V_i|)$ (so $|A_i|=|V_i|^{l_i})$.
Then $\delta(A)$ is the unique number $l\in [0,1]$ such that for 
any $\varepsilon > 0$ in $\RR$, the set $\{ i\in I \colon l-\varepsilon<
l_i <l+\varepsilon\}$ is in $\CF$. We extend $\delta$ to the empty set by setting
$\delta(\emptyset) :=-\infty$. 

In the following lemma we state some basic  properties of $\delta$ that
we need.  Their proofs are not difficult and we refer to \cite{h} for
more details.

\begin{lem}\label{lem:delta} Let $V$ be a pseudo-finite set.
  \begin{enumerate}
  \item  $\delta(V)=1$. 
  \item $\delta(A_1\cup A_2)=\max \{ \delta(A_1),\delta(A_2) \}$ for any
    definable $A_1,A_2\subseteq V$.
  \item Let $Y \subseteq V \times V^m$ and $Z\subseteq V$ be
    definable. Assume that $\delta(Z) = \alpha$ and  for all pairwise
    distinct  $a_1,\dotsc,a_m\in Z$ we have 
$\delta(\{ x\in V \colon (x,a_1,\dotsc,a_m)\in Y \}) \leq \beta$. Then 
\[ \delta(\{ x\in V \colon
\exists z_1,\dotsc,z_m\in Z\, \bigwedge_{i\neq j}
z_i\neq z_j \,\&\, 
(x,z_1,\dotsc z_m)\in Y \}) \leq m\alpha+\beta.\] 
\end{enumerate}
\end{lem}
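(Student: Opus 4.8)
The plan is to reduce all three parts to elementary manipulations of the nonstandard cardinalities $N := |V| = \prod_i |V_i|/\CF$ and $|A| = \prod_i |A_i|/\CF$, using only that $\log N$ is infinite and that the standard part map $\st$ is additive and order-preserving on finite elements of the ultrapower of $\RR$. Part (1) is immediate: $\delta(V) = \st(\log N / \log N) = 1$. For part (2) I would first record \emph{monotonicity}: if $A \subseteq B$ are definable and nonempty then $|A| \le |B|$, so $\log|A| \le \log|B|$, and dividing by $\log N$ and applying $\st$ gives $\delta(A) \le \delta(B)$. Applied to $A_1, A_2 \subseteq A_1 \cup A_2$ this yields $\delta(A_1 \cup A_2) \ge \max\{\delta(A_1), \delta(A_2)\}$. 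For the reverse inequality I use the crude bound $|A_1 \cup A_2| \le |A_1| + |A_2| \le 2\max\{|A_1|, |A_2|\}$, so that $\log|A_1\cup A_2| \le \log 2 + \max\{\log|A_1|, \log|A_2|\}$; since $\log 2/\log N$ is infinitesimal, dividing by $\log N$ and passing to standard parts gives $\delta(A_1 \cup A_2) \le \max\{\delta(A_1), \delta(A_2)\}$. The cases where one of the $A_i$ is empty are absorbed by the $-\infty$ convention.

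Part (3) is the heart of the matter. Writing $W$ for the set whose dimension must be bounded, I would form the definable set
\[
S = \{(x, z_1, \dotsc, z_m) \in V^{m+1} : z_1, \dotsc, z_m \in Z \text{ pairwise distinct and } (x, z_1, \dotsc, z_m) \in Y\}.
\]
The projection of $S$ onto its first coordinate has image exactly $W$, so $|W| \le |S|$. Counting $S$ instead by its last $m$ coordinates gives $|S| = \sum_{\bar z} |Y_{\bar z}|$, where the sum ranges over the (internally finite) set of pairwise distinct $m$-tuples $\bar z$ from $Z$ and $Y_{\bar z} = \{x \in V : (x, \bar z) \in Y\}$. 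Since the number of such tuples is at most $|Z|^m$, we obtain $|S| \le |Z|^m \cdot B$, where $B = \max_{\bar z} |Y_{\bar z}|$.

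The crux is controlling $B$. Because the family of pairwise distinct $m$-tuples from $Z$ is definable, hence internal and finite, the map $\bar z \mapsto |Y_{\bar z}|$ is internal and attains its maximum at an actual tuple $\bar z^* \in Z^m$ with distinct entries (transfer of the finite statement that a function on a finite set attains its maximum). That tuple is precisely an instance of the hypothesis, so $\delta(Y_{\bar z^*}) \le \beta$, i.e.\ $\st(\log B / \log N) \le \beta$. Combining the estimates yields $\log|W| \le m\log|Z| + \log B$; dividing by $\log N$ and applying $\st$ (additivity, together with $\delta(Z) = \alpha$) gives $\delta(W) \le m\alpha + \beta$. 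If $W = \emptyset$ — for instance when $|Z| < m$ and there are no admissible tuples — the inequality holds vacuously by the $-\infty$ convention.

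I expect the only genuine subtlety to be the legitimacy of replacing the \emph{pointwise} hypothesis ``$\delta(Y_{\bar z}) \le \beta$ for every distinct tuple $\bar z$'' by the single \emph{uniform} cardinality bound $B \le N^{\beta + o(1)}$. This is exactly where one must observe that the maximizing tuple $\bar z^*$ is internal, and therefore is itself a genuine element of $Z^m$ over which the external hypothesis already quantifies; no uniformity beyond the internal attainment of the maximum is needed, which is what makes the passage from a fiberwise bound to a bound on the total count go through.
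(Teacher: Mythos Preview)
Your argument is correct in all three parts; the only point requiring care is exactly the one you flag in part (3), and your handling of it---that the internal maximum of the fibre sizes is attained at an actual tuple $\bar z^*\in Z^m$, to which the external fibrewise hypothesis then applies---is sound. The paper does not give its own proof of this lemma: it simply asserts that ``their proofs are not difficult'' and refers the reader to \cite{h}, so there is no approach to compare against beyond noting that your direct cardinality-counting argument is the expected elementary one.
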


In the next section we will prove the following ``non-standard''
version of the main theorem (and in fact a more general version of it for hypergraphs).

\begin{thm}\label{thm:main1}
Let $V$ be a pseudo-finite set and $E\subseteq V\times V$ a definable symmetric   
subset. Assume that the graph $(V,E)$ does not have the $m$-order property 
for some $m \in \NN$. Then there is definable $A\subseteq V$ such
that $\delta(A)>0$ and either $(a,a')\in E$ for all $a\neq a'\in A$ or 
$(a,a')\not\in E$ for all $a\neq a'\in A$.
\end{thm}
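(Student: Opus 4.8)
The plan is to exploit the model-theoretic stability hidden in the $H_k$-freeness hypothesis. Since $(V,E)$ contains no induced half-graph $H_k$, the edge relation $E$ is a \emph{stable} formula: it has no infinite (and in fact no length-$k$) order property. The key dividend of stability is the existence of a small number of complete types, or equivalently, good behavior under Shelah's local rank and the existence of definable averages. My intended engine is the following combinatorial consequence of stability: there is a bound $N=N(k)$ such that one cannot find a long alternating sequence, and more importantly one can find a definable subset on which $E$ behaves ``indiscernibly.'' Concretely, I would aim to produce a definable $A \sub V$ with $\delta(A)>0$ on which $E$ is either complete or empty.

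\medskip
First I would run a density/regularity argument using the dimension $\delta$ from Lemma \ref{lem:delta}. Pick a generic element $a \in V$ (i.e. living in a type of maximal $\delta$). Consider the definable neighborhood $E(a) = \{x : (a,x)\in E\}$ and its complement. By Lemma \ref{lem:delta}(2), one of these has the same dimension as $V$, so without loss of generality some ``large'' definable piece $B_a$ with $\delta(B_a)$ close to $\delta(V)$ is attached to each generic $a$. The stability of $E$ should force these neighborhoods to be \emph{coherent}: by the finite bound on the order property, the family $\{E(a) : a \in V\}$ has finite VC-type behavior and only boundedly many ``$\delta$-distinct'' traces. I would then iterate, passing to successive common neighborhoods (or common non-neighborhoods), at each stage losing only a controlled amount of dimension, and use part (3) of Lemma \ref{lem:delta} to guarantee that after finitely many steps (bounded in terms of $k$) the dimension has not collapsed to $-\infty$. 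The half-graph-freeness is exactly what prevents the alternation that would otherwise destroy the dimension at each step.

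\medskip
The cleanest route, which I would try first, is to build a homogeneous set directly by a stability-theoretic coheir/Ramsey argument inside the pseudo-finite structure. Using stability, extract an infinite $E$-indiscernible sequence $(a_n)$ of elements realizing the generic type; by indiscernibility either all pairs are edges or all pairs are non-edges, giving an infinite clique or anti-clique in the monster model. The work is then to \emph{transfer} this to a definable positive-dimensional set: I would use the stationarity of the generic type and Lemma \ref{lem:delta}(3) to show that the set of $x$ extending the homogeneous pattern is definable and retains positive $\delta$. The role of the $H_k$-freeness is twofold here — it gives stability (so indiscernibles are homogeneous), and it bounds the rank so that the inductive dimension loss in Lemma \ref{lem:delta}(3) is over a bounded range $m=m(k)$.

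\medskip
The hard part will be the dimension bookkeeping. It is easy to find a homogeneous set abstractly; the difficulty is ensuring $\delta(A)>0$ rather than $\delta(A)=0$, which requires that at each of the boundedly many refinement steps the dimension drops by at most a fixed fraction, and that the total number of steps is bounded by a function of $k$ alone. This is precisely where Lemma \ref{lem:delta}(3) with its $m\alpha + \beta$ estimate must be deployed carefully: I expect the main technical obstacle to be controlling the interaction between the number of iterations (governed by the stability rank, hence by $k$) and the multiplicative dimension loss, so that the product stays bounded away from zero. Establishing a quantitative link between the order-property bound $k$ and a bound on the local rank — thereby capping the number of iterations — is the crux on which the whole argument rests.
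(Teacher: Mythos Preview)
Your list of ingredients is right --- stability of $E$ from $H_k$-freeness, Shelah's local $2$-rank, a generic/stationary type, and the $m\alpha+\beta$ estimate of Lemma~\ref{lem:delta}(3) --- and these are exactly what the paper uses. But both of your routes share a genuine gap at the decisive step: you never say how to pass from an abstractly homogeneous object (an indiscernible sequence, or a nested family of neighborhoods) to a \emph{definable} subset of positive $\delta$. Your ``transfer via stationarity and Lemma~\ref{lem:delta}(3)'' is a placeholder, not a mechanism: an indiscernible sequence is countable and not definable, and ``the set of $x$ extending the homogeneous pattern'' is either a type-definable intersection (not definable) or a finite common neighborhood (definable, but not a clique). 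Likewise, your first route of iterating into common neighborhoods builds a clique one vertex at a time; to get a large clique you would need unboundedly many iterations, so bounding the number of steps by $k$ does not help.

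The idea you are missing is this. First pick a large definable $S\subseteq V$ of \emph{minimal} $R_\Delta$-rank among large definable sets; minimality forces, for every $a$ in (the generic type of) $S$, that $S\setminus E(a,S)$ is small. Now --- and this is the trick --- take $A$ to be an \emph{internal} maximal clique in $S$: write $S=\prod_i S_i/\CF$, choose a maximal clique $A_i\subseteq S_i$ in each finite factor, and set $A=\prod_i A_i/\CF$. This $A$ is definable by construction, is a clique, and by maximality every $b\in S\setminus A$ lies in $S\setminus E(a,S)$ for some $a\in A$. Now a \emph{single} application of Lemma~\ref{lem:delta}(3) with $m=1$ gives $\delta(S\setminus A)\le \delta(A)+0$, hence $\delta(A)=\delta(S)>0$. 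There is no iteration and no multiplicative dimension loss in the graph case; the finiteness of the rank is used only once, to guarantee that a minimal-rank large set exists. (The paper's actual write-up runs the argument for $n$-ary $E$, where an induction on arity does appear, but for $n=2$ it collapses to the one-step argument above.)
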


We explain how Theorem
\ref{thm:main} follows from Theorem \ref{thm:main1}. 
Assume that Theorem \ref{thm:main} fails. This means that for a fixed $m$, for
every $r \in \mathbb N$ there is some finite graph $G_r = (V_r, E_r)$ of size at
least $r$ which does not have the $m$-order property and does not have
a homogeneous subset of size at least $|V_r|^{\frac{1}{r}}$. Let $G = (V,E)$
be an ultraproduct of the $G_r$'s modulo some non-principal
ultrafilter $\mathcal{F}$ on $\mathbb{N}$. It follows by $\L$os's theorem that $G$ also does not have the $m$-order property. Thus, we can apply Theorem \ref{thm:main1} and obtain a definable homogeneous set $A \subseteq V$, let's say a clique, with $\delta(A) > \alpha > 0$. By definability $A = \prod_{r \in \mathbb N} A_r / \mathcal{F}$ for some $A_r \subseteq V_r$, and by the definition of the $\delta$-dimension we have that $|A_r| \geq |V_r|^\alpha$ for almost all $r$,  contradicting the assumption.

\if 0

\section{Proof of Theorem \ref{thm:main1}}
Let $V=\prod_{i\in I}V_i/\CF$ be a pseudo-finite set, and
$E=\prod_{i\in I}E_i/\CF$ a definable symmetric subset of $V^2$. We assume that
$E$ is stable.

\begin{defn} 
Let $X\subseteq V$ be a definable subset. We say that
  $X$ is \emph{large} if $\delta(X)>0$, and we say that $X$ is
  \emph{small} if $\delta(X)\leq 0$. 
\end{defn}

Following standard model-theoretic notation, for $x,y\in V$ we may write $V\models E(x,y)$ instead of $(x,y)\in E$, 
and for $v\in V$ and a subset $X\subseteq V$ we let $E(v,X) :=\{ x\in X \colon V\models E(v,x)\}$.

\begin{claim}\label{claim:1}There is a definable large subset $B\subseteq V$ such that for any $v\in
V$ one of the sets $E(v,B)$,  $B\setminus E(v,B)$ is large 
and the other one is small.  
\end{claim}
\begin{proof}
Choose a large definable subset $B$ of $V$ of the smallest  $R(-,E,2)$-rank (see \cite[Chapter II]{ShelahCT} for the basic properties of the $R$-rank and stability). 
\end{proof}

We fix $B$ as in Claim \ref{claim:1}, and let $\alpha=\delta(B) > 0$. 

By Lemma \ref{lem:delta}(2), 
the set $B$ is a disjoint union of two subsets:
\[ B_1=\{ b\in B \colon \delta(E(b,B) )=\alpha \text{ and }
\delta( B\setminus E(b,B))=0\}\]
and 
\[ B_2=\{ b\in B \colon \delta(E(b,B))=0 \text{ and }
\delta(B\setminus E(b,B))=\alpha\},\]
and it is not hard to see that both sets are definable.

Since $\delta(B)=\max \{ \delta(B_1), \delta(B_2) \}$, at least one of these sets must
have $\delta$-dimension $\alpha$. W.l.o.g we assume that $\delta(B_1)=\alpha$. 

\medskip

If $b\in B_1$ then $\delta(B\setminus E(b,B))=0$, hence
\begin{equation}
  \label{eq:1}
\delta(B_1\setminus E(b,B_1))=0 \text{ and } \delta(E(b,B_1))=\alpha.   
\end{equation}

Since $B_1$ is a definable subset of $V$, there are $Y_i \subseteq V_i$ such that 
$B_1=\prod_{i\in I}Y_i /\CF$. For each $i\in I$ we choose a maximal clique 
$A_i\subseteq Y_i$ and let $A=\prod_{i\in I}A_i /\CF$. We have:
\begin{enumerate}[(i)]
\item $A\subseteq B_1$
\item $V\models E(a,a')$ for any $a\neq a' \in A$. 
\item For any $b\in B_1\setminus A$ there is $a\in A$ with $V\not\models
  E(a,b)$. 
\end{enumerate}

We claim that $A$ is large, so it satisfies the conclusion of the theorem. In fact, we claim that  $\delta(A)=\alpha$. Indeed, assume $\delta(A)=\alpha_1<\alpha$. 

By the property (iii) above,
the set $B_1\setminus A$ is covered by the family $\{ B_1 \setminus E(B_1,a)
\colon a\in A\}$. 

Since $\delta(B_1\setminus E(B_1,a))=0$ for all $a\in A$, by Lemma \ref{lem:delta}(3),
\[ \delta(B_1\setminus A)\leq \delta(A) +0=\alpha_1 \]
and, by Lemma \ref{lem:delta}(2),  $\delta(B_1) = \alpha_1< \alpha$ -- a contradiction.

\fi

\section{Proof of Theorem \ref{thm:main1}}

 We fix a pseudo-finite set $V=\prod_{i\in I}V_i/\CF$ and a definable symmetric subset $E=\prod_{i\in I}E_i/\CF$ of $V^n$ (where ``symmetric'' means that it is closed under
permutation of the coordinates). 

We follow standard model-theoretic notation. For $v_1, \ldots, v_{n-1} \in V$ and a subset $X\subseteq V$ we let $E(v_1, \ldots, v_{n-1}, X) :=\{ x\in X \colon V\models E(v_1, \ldots, v_{n-1},x)\}$. 
By a partitioned formula we mean a first-order formula $\phi(x_1,
\ldots, x_k;y_1, \ldots, y_l)$ with two distinguished groups of
variables $\bar{x}$ and $\bar{y}$, and it is \emph{stable} if the
bi-partite graph $(R,V^k,V^l)$ with $R :=\{ (\bar{a}, \bar{b}) \in V^k
\times V^l : V \models \phi(\bar{a}; \bar{b}) \}$ does not have the
$m$-order property for some $m$.
We say that a definable set $X \subseteq V$ is \emph{large} if $\delta(X)>0$, and we say that $X$ is \emph{small} if $\delta(X)\leq 0$.

We prove the following proposition, in particular establishing Theorem \ref{thm:main1}.
\begin{prop}
Assume  that $E(x_1; x_2, \ldots, x_n)$ is stable. Then there is a large definable set $A\subseteq V$ such
that either $(a_1, \ldots, a_n) \in E$ for all
pairwise distinct $a_1,\dotsc, a_n \in A$ or 
$(a_1, \ldots, a_n) \notin E$ for all pairwise distinct  $a_1,\dotsc, a_n \in A$.
\end{prop}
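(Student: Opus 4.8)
The plan is to mimic the (commented-out) binary-case argument, replacing the single neighborhood dichotomy with a rank-minimality argument and then extracting a clique or anti-clique by a fiber-dimension calculation using Lemma~\ref{lem:delta}(3). The main conceptual step is to find a ``homogeneous base'' set on which the relation $E$ behaves predictably, and the main technical step is the inductive dimension estimate controlling the leftover set.

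\medskip

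\textbf{Step 1: Choose a base set of minimal rank.}
First I would let $\Delta = \{E(x_1;x_2,\dots,x_n)\}$ and choose a large definable set $B\subseteq V$ whose fiber $\phi$-type (over the remaining coordinates) has minimal Shelah $2$-rank $R_\Delta$ among all large definable subsets of $V$. Set $\alpha = \delta(B)>0$. The point of minimality is a dichotomy: for any fixed parameters $b_2,\dots,b_n$, the set $E(b_2,\dots,b_n,B)$ and its complement in $B$ cannot both be large, since splitting $B$ into two large pieces of strictly smaller rank would contradict minimality via Lemma~\ref{lem:delta}(2) (the max of two dimensions is the dimension of the union). Thus exactly one of $E(b_2,\dots,b_n,B)$ and $B\setminus E(b_2,\dots,b_n,B)$ is large.

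\medskip

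\textbf{Step 2: Partition $B$ by the dichotomy and pass to a definable large piece.}
Using stability, specifically definability of the relevant $\phi$-types, I would argue that the set $B_1$ of those $(n-1)$-tuples of distinct elements of $B$ whose $E$-fiber in $B$ is large (equivalently, whose complementary fiber is small) is itself definable, and similarly for the opposite case $B_2$. After intersecting down and relabeling, I want a single large definable set (still called $B$, of dimension $\alpha$) so that for all pairwise distinct parameters from $B$ the fiber $E(\cdot,B)$ is large of dimension $\alpha$ while its complement is small (the clique case), or symmetrically the complement is the large one (the anti-clique case). I will carry out the clique case and note the other is dual by replacing $E$ with its complement.

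\medskip

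\textbf{Step 3: Extract a homogeneous subset fiberwise and estimate its complement.}
Writing $B = \prod_{i\in I} Y_i/\CF$, I would choose in each coordinate a maximal $E_i$-clique $A_i\subseteq Y_i$ and set $A = \prod_i A_i/\CF$; by {\L}o\'s's theorem $A$ is an $E$-clique, i.e. $(a_1,\dots,a_n)\in E$ for all pairwise distinct $a_j\in A$, and by maximality every point of $B\setminus A$ fails to extend some $(n-1)$-tuple from $A$ to an edge. This covers $B\setminus A$ by the fibers $B\setminus E(a_1,\dots,a_{n-1},B)$ over $(n-1)$-tuples from $A$, each of dimension $0$. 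Now Lemma~\ref{lem:delta}(3), applied with $Z=A$, $m=n-1$, $\beta=0$, gives $\delta(B\setminus A)\le (n-1)\delta(A)$. If $\delta(A)<\alpha/(n-1)$ were too small this would still leave room, so I must argue that $A$ itself is large: assuming $\delta(A)=\alpha_1<\alpha$ forces $\delta(B\setminus A)\le(n-1)\alpha_1$, and combined with $\delta(B)=\max\{\delta(A),\delta(B\setminus A)\}$ from Lemma~\ref{lem:delta}(2) this would cap $\delta(B)$ below $\alpha$ once the bookkeeping is arranged, a contradiction; hence $\delta(A)>0$ and $A$ is the desired set.

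\medskip

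The step I expect to be the main obstacle is Step~3's dimension bookkeeping in the hypergraph case ($n>2$): unlike the binary case where $\delta(B\setminus A)\le\delta(A)$ directly contradicts $\delta(B)=\alpha$, here the factor $m=n-1$ in Lemma~\ref{lem:delta}(3) means the naive inequality $\delta(B\setminus A)\le(n-1)\delta(A)$ does not immediately yield a contradiction unless the small/large dichotomy of Step~2 is set up to make the fibers genuinely dimension-$0$ and $A$'s dimension is pinned exactly at $\alpha$. So the care lies in engineering Step~2 so that the covering fibers have dimension $0$ (not merely $<\alpha$), after which the multiplier $n-1$ times $0$ is harmless and the argument closes exactly as in the binary case. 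A secondary subtlety is justifying definability of $B_1$ and $B_2$ in Step~2, which is where local stability (definability of $\phi$-types) is essential.
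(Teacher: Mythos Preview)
Your Step~2 contains a genuine gap that, for $n>2$, is the heart of the matter. The dichotomy from Step~1 tells you that for each $(n-1)$-tuple $\bar b$ from $B$, exactly one of $E(\bar b,B)$ and its complement is large; and definability of types lets you express ``$E(\bar b,x)$ lies in the generic type of $B$'' by some formula $\psi(\bar b)$. But $\psi$ is a relation on $(n{-}1)$-tuples, and what you need is a large $B'\subseteq B$ such that \emph{every} distinct $(n{-}1)$-tuple from $B'$ satisfies $\psi$ (or all satisfy $\neg\psi$). That is precisely the $(n{-}1)$-ary instance of the proposition you are trying to prove, and ``intersecting down and relabeling'' does not accomplish it. The paper handles this by induction on arity: it fixes in advance a finite set $\Delta$ of stable formulas containing $E$, closed under negation, permutation, and \emph{taking defining formulas for types}; it then chooses $S$ large of minimal $R_\Delta$-rank together with a generic $\Delta$-type $p$, and proves by induction on $k$ that for any $r(x_1,\dots,x_k)\in\Delta$ implied by $p^{(k)}$ there is a large $r$-homogeneous set. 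At the inductive step one passes from $r$ to its defining formula $\psi\in\Delta$ of arity $k-1$, applies the inductive hypothesis to get $B$ homogeneous for $\psi$, and only then runs your Step~3 extraction. Taking $\Delta=\{E\}$ as you do is insufficient: the defining formula $\psi$ need not be an instance of $E$, so the rank-minimality of $B$ says nothing about how $\psi$ partitions $B^{n-1}$.

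Your Step~3 bookkeeping is also slightly off: from $\delta(B\setminus A)\le (n-1)\delta(A)$ and $\delta(B)=\max\{\delta(A),\delta(B\setminus A)\}$ you must assume $\delta(A)<\alpha/(n-1)$ (not merely $<\alpha$) to derive a contradiction, yielding $\delta(A)\ge\alpha/(n-1)>0$. You seem aware of this (you mention the threshold $\alpha/(n-1)$), but then revert to the binary-case inequality. This is a minor slip compared to the missing induction above.
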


We will use some basic local
stability such as definability of types and Shelah's $2$-rank
$R_{\Delta}(-) := R(-,\Delta,2)$ (and refer to \cite[Chapter II]{ShelahCT} for the details).

We will use $\Delta$ to denote a finite set of (non-partitioned) formulas.
By a $\Delta$-formula $\psi(\bar{x})$ over a set of parameters $W \subseteq V$ we mean a Boolean combination of formulas of the form $\phi(\bar{x}, \bar{a})$ where $\phi(\bar{x}, \bar{y})$ is a formula from $\Delta$ and $\bar{a}$ is a tuple of elements from $W$. We let $\Delta(W)$ denote the set of all $\Delta$-formulas over $W$.
By a complete $\Delta$-type $p(\bar{x})$ over $W$ we mean a maximal consistent  collection of $\Delta$-formulas of the form $\psi(\bar x)$ over $W$ ($p$ is axiomatized by specifying, for every $\phi(\bar{x}, \bar{y}) \in \Delta$ and $\bar{a} \in W^{|\bar{y}|}$, whether $\phi(\bar{x}, \bar{a}) \in p$ or $\neg \phi(\bar{x}, \bar{a}) \in p$).

For any permutation $\sigma \in \mathrm{Sym}(n)$, let $\phi_\sigma(x_1, \ldots, x_n) = E(x_{\sigma(1)}, \ldots, x_{\sigma(n)})$. From now on we fix $\Delta = \{ \phi_{\sigma}(x_1, \ldots, x_n) : \sigma \in \mathrm{Sym}(n) \} \cup \{ x_1 = x_2 \}$.

Our assumption is that the \emph{partitioned} formula $\phi(x;\bar{y}) = E(x,y_1, \ldots, y_{n-1})$ is stable. By the basic properties of stable formulas we then have the following.
\begin{enumerate}
	\item Every partitioned $\Delta(V)$-formula $\phi(x;\bar{y})$ is stable, where $x$ a single variable. This follows from the assumption since $E$ is symmetric and the set of stable formulas is closed under Boolean combinations and under replacing some of the variables by a fixed parameter.
	\item Every complete $\Delta$-type $p(x)$ over $V$, with $x$ a single variable, is definable using $\Delta$-formulas over $V$. Indeed, for a partitioned $\Delta(V)$-formula $\phi(x;\bar{y})$, which is stable by (1), the type $p \restriction \phi$ is definable by a Boolean combination of instances of the formula $\phi^*(\bar{y};x) = \phi(x;\bar{y})$, with parameters in $V$. Which is also a $\Delta(V)$-formula.
	\item For any complete $\Delta$-type $p(x)$ over $V$ and $k \in \mathbb N$ we have a complete $\Delta$-type $p^{(k)}(x_1, \ldots, x_k)$ over $V$ --- the type of a \emph{Morley sequence} in $p$. Namely, as $p$ is
 definable by (2), say using $\Delta(V_0)$-formulas for some countable $V_0 \subseteq
 V$, we take $p^{(k)}
 = \bigcup \{ \tp_{\Delta}(a_k, \ldots, a_1 / V') : V_0 \subset V'
 \subset V \text{ countable}, a_{i+1} \models p\upharpoonright _{V' a_0 \ldots a_i}
 \text{ for } i<k\}$. By a standard argument $p^{(k)}$ is well-defined.

\end{enumerate}

Consider $\Delta' = \{\phi(x;\bar{y}) : \phi(x,\bar{y}) \in \Delta, |x|=1 \}$, a finite set of partitioned formulas. Slightly abusing the notation, we will write $R_{\Delta}(-)$ to refer to $R_{\Delta'}(-)$. As every partitioned formula in $\Delta'$ is stable by (1), $R_{\Delta}(x=x)$ is
finite. Let $S \subseteq V$ be a large definable subset of the
smallest $R_{\Delta}$-rank. By Lemma \ref{lem:delta}(2), $S$ cannot
be covered by finitely many definable sets of smaller $R_\Delta$-rank,
hence by compactness there is a complete $\Delta$-type $p(x)$
over $V$ such that $R_{\Delta}(S(x) \cap p(x)) =
R_{\Delta}(S)$ 
 (and in fact $p$ is the unique type with this property).

\begin{claim} For any formula $r(x_1, \ldots, x_k) \in \Delta(V)$, if $p^{(k)} \vdash r(x_1, \ldots, x_k)$, then there is a large definable $A \subseteq S$ such that $\models r(a_1, \ldots, a_k)$ holds for any pairwise distinct $a_1, \ldots, a_k$ from $A$.
\end{claim}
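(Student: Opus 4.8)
The plan is to argue by induction on $k$, reducing the arity one coordinate at a time by peeling off the most generic variable and using definability of $p$. Throughout I would use that a Morley sequence in a stable theory is totally indiscernible, so that $p^{(k)}$ is invariant under all permutations of its variables; consequently $p^{(k)} \vdash r$ forces $p^{(k)} \vdash r(x_{\sigma(1)}, \ldots, x_{\sigma(k)})$ for every permutation $\sigma$, and it then suffices to produce a large $A$ on which $r$ holds for tuples listed in a single fixed order — the requirement for all pairwise distinct tuples follows.

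To make the induction go through I would strengthen the statement, asking in addition that the set $A$ produced be \emph{generic} in $S$, in the sense that $R_{\Delta}(A) = R_{\Delta}(S)$ and each element of $A$ realizes the restriction of $p$ to the previously chosen elements (so that pairwise distinct sub-tuples of $A$ realize the lower types $p^{(j)}$). For the inductive step, write $r(x_1; x_2, \ldots, x_k)$ with $x_1$ the distinguished generic coordinate. Since $p^{(k)}$ is the type of a generic sequence in which $x_1$ realizes $p$ over $x_2, \ldots, x_k$, definability of $p$ (property (4)) supplies a formula $d_{p,r}(x_2,\ldots,x_k)$, an instance of some member of $\Delta$, with $r(x,\bar{b}) \in p \iff \models d_{p,r}(\bar{b})$; the hypothesis $p^{(k)} \vdash r$ then translates into $p^{(k-1)} \vdash d_{p,r}$. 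Applying the inductive hypothesis to $d_{p,r} \in \Delta$ yields a large generic $A_0 \subseteq S$ all of whose pairwise distinct $(k-1)$-tuples satisfy $d_{p,r}$, i.e. over every such tuple the generic type $p$ implies $r$.

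The construction of $A$ then mirrors the maximal-clique argument of the graph case: writing $A_0 = \prod_i Y_i/\CF$, I would choose in each fibre a maximal subset $A_i \subseteq Y_i$ that is $r$-homogeneous and set $A = \prod_i A_i/\CF$. By construction $A \subseteq A_0$ is definable and $r$-homogeneous, and by maximality every $s \in A_0 \setminus A$ fails $r$ together with some pairwise distinct $(k-1)$-tuple from $A$; hence $A_0 \setminus A$ is covered by the family $\{\, x \in A_0 : \neg r(x,\bar{a}) \,\}$ as $\bar{a}$ ranges over such tuples. For each fixed $\bar{a}$, the set $\{ x \in A_0 : r(x,\bar{a})\}$ is exactly the locus where $p$ is realized over $\bar{a}$, so its complement in $A_0$ has strictly smaller $R_{\Delta}$-rank than $R_{\Delta}(S)$ and is therefore small by minimality of $S$; thus each fibre has $\delta = 0$. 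Lemma \ref{lem:delta}(3) then gives $\delta(A_0 \setminus A) \leq (k-1)\delta(A)$, and if $\delta(A) < \delta(A_0)$ then Lemma \ref{lem:delta}(2) yields $\delta(A_0) \leq (k-1)\delta(A) < \delta(A_0)$, a contradiction; hence $\delta(A) = \delta(A_0) = \delta(S) > 0$.

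The hard part will be the smallness of the fibres, i.e. establishing cleanly that over a generic $(k-1)$-tuple $\bar{a}$ from $A$ the set where $\neg r(x,\bar{a})$ holds has strictly smaller $R_{\Delta}$-rank and hence $\delta = 0$. This is exactly where the choice of $S$ as a large set of minimal $R_{\Delta}$-rank, the coherence between $p$ and its definitions, and the genericity built into $A$ all have to be combined. Making the induction self-sustaining — so that the $A$ produced is itself generic enough to serve at the next arity, and so that the defining formulas $d_{p,r}$ remain inside $\Delta$ — is the delicate bookkeeping on which the argument rests.
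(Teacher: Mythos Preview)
Your overall strategy matches the paper's proof exactly: induct on $k$, use definability of $p$ to pass from $r$ to a $(k-1)$-ary defining formula in $\Delta$, apply the inductive hypothesis to get a large $B$, then take a maximal $r$-homogeneous subset in each coordinate of the ultraproduct and bound its $\delta$ via Lemma~\ref{lem:delta}(3). Two details, however, are off.

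First, your proposed strengthening of the induction hypothesis --- that the set produced be ``generic'' with $R_{\Delta}(A) = R_{\Delta}(S)$ and that its elements realize $p$ over one another --- is both unnecessary and not maintainable. It is unnecessary because the smallness of each fibre $B \setminus r(\bar{a},B)$ follows directly from $d_{p,r}(\bar{a})$ holding (so $r(\bar{a},x) \in p$), the equality $R_{\Delta}(\{S\} \cup p) = R_{\Delta}(S)$, and the inclusion $B \subseteq S$; the $R_{\Delta}$-rank of $B$ itself never enters the argument. It is not maintainable because the maximal-homogeneous construction gives you no control whatsoever over $R_{\Delta}(A)$ or over whether elements of $A$ realize $p$ over each other, so you cannot close the induction with this extra clause. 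Simply drop the strengthening; the plain inductive statement suffices.

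Second, the final arithmetic is wrong. From $\delta(A_0 \setminus A) \leq (k-1)\,\delta(A)$ and Lemma~\ref{lem:delta}(2) you only obtain $\delta(A_0) \leq (k-1)\,\delta(A)$, and this does \emph{not} contradict the assumption $\delta(A) < \delta(A_0)$ once $k>2$. The correct conclusion is $\delta(A) \geq \tfrac{1}{k-1}\,\delta(A_0)$, which is exactly what the paper proves and is enough for $\delta(A) > 0$. In particular you should not expect $\delta(A) = \delta(S)$ to propagate through the induction; the dimension may drop by a factor of $k-1$ at each step.
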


\begin{proof}
We prove the claim by induction on $k$.

\medskip
\noindent Case $k=1$. If $p(x_1) \vdash r(x_1)$ and $r(x_1) \in \Delta(V)$, then by the choice of $p$ we have $R_{\Delta}(r(x_1) \cap S(x_1)) = R_{\Delta}(S(x_1))$. Thus $R_{\Delta}(\neg r(x_1) \cap S(x_1)) < R_{\Delta}(S(x_1))$ by the definition of rank, so $\delta( \neg r(x_1) \cap S(x_1)) = 0$ by the choice of $S$, so $\delta(r(x_1) \cap S(x_1)) > 0$. Thus we can take $A=r(S)$. 

\medskip
\noindent Assume $k>1$.

By the definition of $p^{(k)}$ in (3) above, there is some $\psi(x_1, \ldots, x_{k-1}) \in \Delta(V)$ such that $p\upharpoonright_{r(x_1, \ldots, x_{k-1};x_k)}$ is defined by $\psi(x_1, \ldots, x_{k-1})$, i.e. $$r(v_1, \ldots, v_{k-1}; x_k) \in p(x_k) \iff V \models \psi(v_1, \ldots, v_{k-1})$$ for any $v_1, \ldots, v_{k-1} \in V$. 

Also  $p^{(k-1)} \vdash \psi(x_1, \ldots, x_{k-1})$ as $p^{(k)} \vdash r(x_1, \ldots, x_k)$. By the inductive assumption, there is some large definable $B \subseteq S$ such that $V \models \psi(b_1, \ldots, b_{k-1})$ holds for all pairwise distinct $b_1, \ldots, b_{k-1} \in B$. 
As $B$ is definable, there are some $B_i \subseteq S_i$ such that $B =
\prod_{i \in I} B_i / \CF$. For each $i$, let $A_i \subseteq B_i$ be
maximal (under inclusion) such that $r_i(a_1, \ldots , a_k)$ holds for all pairwise distinct $a_1, \ldots, a_k \in A_i$, and let $A := \prod_{i \in I} A_i / \CF$. We have:
\begin{enumerate}[(i)]
\item $A \subseteq B$
\item $V \models r(a_1, \ldots, a_k)$ for any pairwise distinct $a_1, \ldots, a_k \in A$. 
\item For any $b \in B \setminus A$ there are some pairwise distinct $a_1, \ldots, a_{k-1}$ in $A$ such that $V\not\models
  r(a_1,\ldots, a_k, b)$. 
\end{enumerate}

We claim that $A$ is large, so satisfies the conclusion of the claim. In fact, we show that $\delta(A) \geq  \frac{1}{k-1}\delta(B)$. Assume not, say $\delta(A) = \alpha_1 < \frac{1}{k-1}\delta(B)$. 
For all pairwise distinct  $a_1,\dotsc,a_{k-1} \in A$ we have $V
\models \psi(a_1, \ldots, a_{k-1})$, so $r(a_1, \ldots, a_{k-1},x_k)
\in p$. By the choice of $p$, the $R_\Delta$-rank of $r(a_1, \ldots,
a_{k-1},S)$ is equal to the $R_\Delta$-rank of $S$, so the
$R_\Delta$-rank of $\neg r(a_1, \ldots, a_{k-1},S)$ has to be smaller
than the $R_\Delta$-rank of $S$, which implies that $\delta(B
\setminus r(a_1, \ldots, a_{k-1}, B)) = 0$ by the choice of $S$. By
the property (iii) above, the set $B \setminus A$ is covered by the
family $\{ B \setminus r(a_1, \ldots, a_{k-1}, B) : a_1,\dotsc,a_{k-1} \in A, \bigwedge_{i\neq j} a_i\neq a_j \}$.

Then by Lemma \ref{lem:delta}(3),
\[ \delta(B \setminus A)\leq (k-1)\delta(A) +0 \leq (k-1) \alpha_1 \]
which implies by Lemma \ref{lem:delta}(2) that $\delta(B) \leq  (k-1) \alpha_1< \alpha$ --- a contradiction.
\end{proof}

Finally, as both $E(x_1, \ldots, x_n)$ and $\neg E(x_1, \ldots, x_n)$ are in $\Delta$ and either $p^{(n)} \vdash E(x_1, \ldots, x_n)$ or $p^{(n)} \vdash \neg E(x_1, \ldots, x_n)$, the proposition follows.

\bibliographystyle{acm}
\bibliography{refs}  

\end{document}